\documentclass[11pt]{amsart}
\usepackage[english]{babel}
\usepackage{graphicx}
\usepackage[T1]{fontenc}
\usepackage{color}
\usepackage{bigints}
\usepackage{soul}

\newtheorem{Defi}{Definition}
\newtheorem{Thm}{Theorem}
\newtheorem{Prop}[Thm]{Proposition}
\newtheorem{Cor}{Corollary}[Thm]
\newtheorem{Lem}{Lemma}
\newtheorem{Rem}{Remark}
\newtheorem*{Prop*}{Proposition}
\newtheorem*{Cor*}{Corollary}
\newtheorem*{Thm*}{Theorem}

%\def\Remark{\medskip\noindent {\bf Remark.}\ \ignorespaces}

%%%%% PERSONAL MACROS

\def\R{{\mathbb R}}
\def\N{{\mathbb N}}

\DeclareMathAlphabet{\mathitbf}{OML}{cmm}{b}{it}

\newcommand{\dsp}{\displaystyle}

\setlength{\textwidth}{16.4cm}
\setlength{\textheight}{21cm}
\setlength{\oddsidemargin}{-.5cm}
\setlength{\evensidemargin}{-.5cm}

\begin{document}
%%%%%%%%%%%%%%%%%%%%%%%%%%%%%%%%%%%%%%%%%%%%%%%%%%%%%%%%%%%%%%%%%%%%%
\title[Existence  and uniqueness analysis of a non-isothermal cross-diffusion system]{Existence and uniqueness analysis of a non-isothermal cross-diffusion system of Maxwell-Stefan type}
\bibliographystyle{plain}

\author[Harsha Hutridurga]{Harsha Hutridurga}
\address{H.H.: Department of Mathematics, Imperial College London, London, SW7 2AZ, United Kingdom.}
\email{h.hutridurga-ramaiah@imperial.ac.uk}

\author[Francesco Salvarani]{Francesco Salvarani}
\address{F.S.: Universit\'e Paris-Dauphine, Ceremade, UMR CNRS 7534, F-75775
Paris Cedex 16, France \& Universit\`a degli Studi di Pavia, Dipartimento di
Matematica, I-27100 Pavia, Italy} 
\email{francesco.salvarani@unipv.it}

\begin{abstract}
In this article we prove local-in-time existence and uniqueness of solution to a non-isothermal cross-diffusion system with Maxwell-Stefan structure.
\end{abstract}

\maketitle

%%%%%%%%%%%%%%%%%%%%%%%%%%%%%
\section{Introduction}\label{sec:introduction}
%%%%%%%%%%%%%%%%%%%%%%%%%%%%%

The Maxwell-Stefan diffusion equations provide an accurate description of diffusive phenomena in gaseous mixtures. Despite its wide use in applied contexts (see \cite{taylor1993multicomponent, 
kri-wes-97}), the mathematical study of the Maxwell-Stefan system is quite a recent subject (we refer to \cite{gio_book, bot-11, jun-ste-13,  bou-gre-sal-12, che-jun-15, mcl-bou-14, bou-gre-sal-15, bou-gre-pav-16, hur-sal-16} and the references therein).

\textcolor{black}{Usually, the derivation of the Maxwell-Stefan diffusion equation starting from the Boltzmann system for mixtures has been obtained by supposing the absence of advective phenomena and with uniform and constant temperature \cite{bou-gre-sal-15, bou-gre-pav-16, hur-sal-16}. However,}
applications often require us to take into account the effects of the fluctuations in temperature on the diffusive behaviour of gaseous mixtures.
In a previous note \cite{HHFS2}, we derived a system which models diffusive phenomena in a non-isothermal context, whose precise form is the following:
\begin{equation}\label{eq:coupled-system-T-known}
\left\{
\begin{array}{ll}
\displaystyle
\partial_t c_i + \nabla_x\cdot J_i = 0 &\qquad \mbox{ in } (0,\infty)\times \Omega, \quad i=1,\dots,n\\ \\
\displaystyle \nabla_x \left( c_i T \right) = -\sum_{j\neq i} k_{ij} \left(c_j J_i-c_i J_j\right)& \qquad
\mbox{ in } (0,\infty)\times \Omega, \quad i=1,\dots,n\\
\displaystyle \sum_{i=1}^n J_i = - \alpha \nabla c_{\rm tot} &\qquad
\mbox{ in } (0,\infty)\times \Omega,
\end{array}
\right.
\end{equation}
for the unknown concentrations $c_i(t,x)$, $i=1,\dots,n$ and for the unknown fluxes $J_i(t,x)$, $i=1,\dots,n$. The spatial domain $\Omega\subset \R^d$ with $d\in \N$ is assumed to be bounded and with regular boundary (at least of class $C^1$). The coefficients $k_{ij}\in\R$ are strictly positive for $i\neq j$, $i,j=1,\dots,n$ and are symmetric, i.e. $k_{ij}=k_{ji}$. \textcolor{black}{Note that the diagonal elements $k_{ii}$ for $i=1,\dots,n$ do not play any role in the flux-gradient relations in \eqref{eq:coupled-system-T-known}.} The coefficient $\alpha\in\R$ is strictly positive. The function $T(t,x)$ represents the unknown local temperature of the system and by $c_{\rm tot}(t,x)$ we denote the total concentration of the mixture, i.e.
$$
c_{\rm{tot}}(t,x):= \sum_{i=1}^n c_i(t,x).
$$
This model will be supplemented with suitable initial conditions $(c^{\rm{in}}_1,\dots, c^{\rm{in}}_n)$, $T^{\rm{in}}(x)$,  and with homogeneous Neumann boundary conditions:
$$
\nabla c_i(t,x)\cdot {\bf n}(x) =0, \qquad (t,x)\in (0,\infty)\times \partial\Omega, \qquad i=1,\dots, n.
$$
In \cite{HHFS2}, we have shown that it is possible to formally decouple the behaviour of the unknown quantities $c_{\rm tot}(t,x)$ and $T(t,x)$ from system (\ref{eq:coupled-system-T-known}): they satisfy the coupled system (see \cite[Lemma 2]{HHFS2})
\begin{align}
& \partial_t c_{\rm tot} - \alpha \Delta c_{\rm tot} = 0 \qquad \qquad \qquad \qquad \qquad \qquad \qquad \quad \mbox{ in } (0,\infty)\times\Omega,\label{eq:evol-total-conc}
\\[0.2 cm]
& \partial_t T - \left( \frac{2}{3} \partial_t \log c_{\rm tot} \right) T - \left( \frac{5\alpha}{3} \nabla \log c_{\rm tot} \right)\cdot \nabla T = 0
%\partial_t (c_{\rm tot}T) - \frac{5\alpha}{3} \nabla \cdot \left( T \nabla c_{\rm tot}\right) = 0 
\quad\, \mbox{ in } (0,\infty)\times\Omega,\label{eq:evol-temperature}
\end{align}
with initial conditions
$$
c_{\rm tot}(0,x)= c_{\rm tot}^{\rm{in}}(x) :=  \sum_{i=1}^n c_i^{\rm{in}}(x); \qquad  T(0,x)=T^{\rm{in}}(x) \qquad x\in\Omega,
$$
for $i=1,\dots, n$.\\
In the decoupled system \eqref{eq:evol-total-conc}-\eqref{eq:evol-temperature}, the evolution of the total concentration field is governed by Fickian diffusion and the temperature field satisfies an advection equation where the temperature field is being advected by the gradients of the concentration field. As the advective field is of zero normal flux on the boundary $\partial\Omega$, we do not need to impose any type of boundary data for the temperature field. In the regime described by the above decoupled model, the variations of temperature with respect to space and time are completely driven by the variations of the total concentration field.\\
%We have solved for the temperature $T$ and the total concentration $c_{\rm{tot}}$. 
We borrow the following result from \cite{HHFS2} about the coupled system \eqref{eq:evol-total-conc}-\eqref{eq:evol-temperature}:
\begin{Prop}\label{prop:max-princ-cT}
Suppose the initial data $(c_{\rm tot}^{\rm{in}}, T^{\rm{in}})$ to the evolution equations \eqref{eq:evol-total-conc}-\eqref{eq:evol-temperature} are non-negative and satisfy
\begin{align*}
0 < c_{\rm{min}} \le c_{\rm tot}^{\rm{in}}(x) \le c_{\rm{max}} <\infty; \quad
0 < T_{\rm{min}} \le T^{\rm{in}}(x) \le T_{\rm{max}} <\infty.
\end{align*}
Then
\begin{align*}
c_{\rm{min}} \le c_{\rm{tot}}(t,x) \le c_{\rm{max}} \qquad (t,x)\in [0,\infty)\times\Omega.
\end{align*}
Furthermore
\begin{align}\label{eq:temp-solution}
T(t,x) = T^{\rm in}({\scriptstyle X}(0;t,x)) e^{\frac23\int\limits_0^t \partial_t \left(\log c_{\rm tot}\right) (s,{\scriptscriptstyle X}(s;t,x))\, {\rm d}s} \qquad \mbox{ for } (t,x)\in [0,\infty)\times\Omega,
\end{align}
where ${\scriptstyle X}(s;t,x)$ is the flow associated with the vector field $\mathcal{V}(t,x):= -\frac{5\alpha}{3}\nabla \log c_{\rm tot}$.
%Furthermore
%\begin{align*}
%T(t,x) = T^{\rm{in}}({\scriptstyle X}(x;-t)) \left(\frac{c_{\rm{tot}}(t,{\scriptstyle X}(x;-t))}{c_{\rm tot}^{\rm{in}}(x)}\right)^{2/3} \qquad (t,x)\in [0,\infty)\times\Omega,
%\end{align*}
%where ${\scriptstyle X}(x;s)$ can be explicitly solved.
\end{Prop}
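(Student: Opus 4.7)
The plan is to handle the two assertions separately.

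For the $L^\infty$ bounds on $c_{\rm tot}$, observe that \eqref{eq:evol-total-conc} is a pure linear heat equation. Moreover, summing the zero normal flux conditions $\nabla c_i\cdot {\bf n}=0$ imposed on each species immediately yields $\nabla c_{\rm tot}\cdot {\bf n}=0$ on $\partial\Omega$. The classical weak maximum and minimum principles for the homogeneous Neumann heat equation then give at once $c_{\rm min}\le c_{\rm tot}(t,x)\le c_{\rm max}$ for every $(t,x)\in[0,\infty)\times\Omega$. No cross-diffusion machinery is needed at this stage because the closure in \eqref{eq:coupled-system-T-known} for $\sum_i J_i$ completely decouples the evolution of $c_{\rm tot}$.

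For the representation \eqref{eq:temp-solution}, I would apply the method of characteristics to the transport-type equation \eqref{eq:evol-temperature}, recast as
$$
\partial_t T + \mathcal{V}(t,x)\cdot \nabla T \;=\; \tfrac{2}{3}\bigl(\partial_t \log c_{\rm tot}\bigr)\,T,
\qquad \mathcal{V}(t,x):=-\tfrac{5\alpha}{3}\nabla\log c_{\rm tot}.
$$
Introduce the associated flow $X(s;t,x)$ by $\partial_s X(s;t,x) = \mathcal{V}(s,X(s;t,x))$ with $X(t;t,x)=x$, and set $g(s):=T(s,X(s;t,x))$. Differentiating along the characteristic gives the scalar linear ODE
$$
g'(s) \;=\; \tfrac{2}{3}\bigl[\partial_t \log c_{\rm tot}\bigr]\bigl(s,X(s;t,x)\bigr)\, g(s),
$$
whose integration from $s=0$ to $s=t$, together with $g(0)=T^{\rm in}(X(0;t,x))$ and $g(t)=T(t,x)$, delivers exactly the announced exponential formula.

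The main technical obstacle is justifying that the flow $X(\cdot;t,x)$ is globally well defined on $[0,t]$ and remains inside $\overline{\Omega}$. Non-escape is automatic: since $\nabla c_{\rm tot}\cdot{\bf n}=0$ on $\partial\Omega$, the vector field $\mathcal{V}$ is tangent to $\partial\Omega$, so characteristics starting in $\overline{\Omega}$ cannot exit it. For solvability of the ODE one needs $\mathcal{V}$ Lipschitz in $x$ and continuous in $t$; here the uniform positivity $c_{\rm tot}\ge c_{\rm min}>0$ secured in the first step is crucial, as it precludes $\log c_{\rm tot}$ from blowing up, and one additionally relies on parabolic smoothing of \eqref{eq:evol-total-conc} applied to a sufficiently regular initial datum $c_{\rm tot}^{\rm in}$ to obtain the requisite spatial regularity of $\nabla\log c_{\rm tot}$.
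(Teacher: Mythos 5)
Your proof is correct. Note that the paper itself gives no proof of this proposition --- it is quoted verbatim from the companion note \cite{HHFS2} --- so there is nothing to compare against line by line; but your two-step argument (weak maximum principle for the homogeneous Neumann heat equation \eqref{eq:evol-total-conc}, then the method of characteristics along the flow of $\mathcal{V}$ for the transport equation \eqref{eq:evol-temperature}) is exactly the natural route to both conclusions, and the exponential formula you derive matches \eqref{eq:temp-solution}. Your closing caveat is the right one to flag: since $\partial_t \log c_{\rm tot} = \alpha\,\Delta c_{\rm tot}/c_{\rm tot}$, the integrand in \eqref{eq:temp-solution} can be singular as $s\to 0^+$ for merely bounded initial data, so either one assumes enough regularity on $c_{\rm tot}^{\rm in}$ (as you do) or one checks that the singularity is integrable in time; the uniform lower bound $c_{\rm tot}\ge c_{\rm min}>0$ from the first step is indeed what keeps $\mathcal{V}$ well defined and Lipschitz for $t>0$.
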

The $n$ mass balance equations in \eqref{eq:coupled-system-T-known} can be compactly written as
\begin{align}\label{eq:compact-mass-balance}
\partial_t {\bf c} + {\rm div}_x\, {\bf J} = 0
\end{align}
where we have used the following notations:
\[
\mbox{Concentration vector: } {\bf c} = \left( c_1, \dots, c_n \right)^\top \in \R^n;
\qquad
\mbox{Flux matrix: }{\bf J} = \left( J_1, \dots, J_n \right)^\top \in \R^{n\times d}.
\]
The operation of divergence on the matrix ${\bf J}$ should be understood as taking divergence of each row vector of ${\bf J}$, thus the end result ${\rm div}_x\, {\bf J}\in \R^n$. Next, consider the flux-gradient relations in \eqref{eq:coupled-system-T-known}. Define matrices ${\bf D}\in \R^{n\times d}$ and ${\bf F}\in\R^{n\times n}$ as
\begin{align*}
{\bf D}_{ij} := \frac{\partial (c_i T)}{\partial x_j}\quad i=1,\dots,n\quad j=1,\dots, d;
\qquad
{\bf F}_{ij} := \left\{
\begin{array}{cc}
k_{ij} c_i & \mbox{ for }j\neq i,\, i,j=1,\dots,n.
\\[0.2 cm]
\dsp -\sum_{\textcolor{black}{r\neq i}} k_{i\textcolor{black}{r}} c_{\textcolor{black}{r}} & \mbox{ for }j = i=1,\dots,n.
\end{array}\right.
\end{align*}
Then, the flux-gradient relations in \eqref{eq:coupled-system-T-known} can be compactly written as
\begin{align}\label{eq:compact-flux-gradient}
{\bf D} = {\bf F}{\bf J}.
\end{align}
Note that $\mbox{Ker}({\bf F}^\top) = \mbox{span}\{{\bf 1}\}$ with ${\bf 1}=\left(1,\dots,1\right)^\top\in\R^n$, because of the symmetry $k_{ij} = k_{ji}$ for all $i,j=1,\dots,n$. \textcolor{black}{The linear dependence of the flux-gradient relations in \eqref{eq:coupled-system-T-known} implies that the columns of ${\bf D}$ belong to $\{{\rm span}\{{\bf 1}\}\}^\perp$.} Fredholm Alternative, thus, implies that we can solve for ${\bf J}$ in terms of ${\bf D}$. With regard to inverting the relation \eqref{eq:compact-flux-gradient}, we shall follow the lead in \cite{bot-11, jun-ste-13} and apply the Perron-Frobenius theory to the quasi-positive matrix ${\bf F}$. Next, we record some spectral properties of the matrix ${\bf F}$ adapted from \cite{jun-ste-13} (we refer to \cite[Lemma 2.1 and Lemma 2.2 on pp.2426-2427]{jun-ste-13} for detailed proof).
\begin{Lem}\label{lem:spectrum-F}
Let $\delta:= c_{\rm{min}} \min_{i,j=1,\dots,n,\, i\neq j} k_{ij} >0$ and let $\eta := 2 c_{\rm max} \sum_{i,j=1, j\neq i}^n k_{ij}$. Then the spectrum of $-{\bf F}$ satisfies
\[
\sigma(-{\bf F}) \subset \{0\} \cup \big[\delta , \eta \big).
\]
Let $\widetilde{\bf F} := {\bf F}\big|_{im({\bf F})}$. Then, $\widetilde{\bf F}$ is invertible on the image $im({\bf F})$. Furthermore, the spectrum of $-\widetilde{\bf F}$ satisfies
\[
\sigma(-\widetilde{\bf F}) \subset \big[\delta , \eta \big).
\]
\end{Lem}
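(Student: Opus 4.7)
Since every concentration is bounded below by $c_{\min}>0$, the diagonal matrix $D:=\mathrm{diag}(c_1,\dots,c_n)$ is invertible, and I would begin by symmetrizing through the similarity transformation $S:=D^{-1/2}\,{\bf F}\,D^{1/2}$. A direct computation, using $k_{ij}=k_{ji}$, yields $S_{ij}=k_{ij}\sqrt{c_ic_j}$ for $i\ne j$ and $S_{ii}={\bf F}_{ii}=-\sum_{r\ne i}k_{ir}c_r$, so $S$ is real and symmetric. Thus $-{\bf F}$ is similar to the symmetric matrix $-S$, hence diagonalizable with real spectrum, and $\sigma(-{\bf F})=\sigma(-S)$.

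The central identity, obtained by splitting the diagonal sum $\sum_{i\ne j}k_{ij}c_jv_i^{2}$ symmetrically in $i$ and $j$ via $k_{ij}=k_{ji}$, is
\begin{equation*}
v^{\top}(-S)v \;=\; \sum_{i<j}k_{ij}\bigl(\sqrt{c_j}\,v_i-\sqrt{c_i}\,v_j\bigr)^{2}\qquad\text{for all }v\in\R^{n}.
\end{equation*}
Non-negativity gives $\sigma(-S)\subset[0,\infty)$, and the vanishing condition (combined with $k_{ij}>0$ for all $i\ne j$) forces $v\in\mathrm{span}\{(\sqrt{c_1},\dots,\sqrt{c_n})^{\top}\}$, so $\ker(-S)$ is one-dimensional; transporting by $D^{1/2}$ gives $\ker(-{\bf F})=\mathrm{span}\{(c_1,\dots,c_n)^{\top}\}$. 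Diagonalizability of $-S$ yields the splitting $\R^{n}=\ker({\bf F})\oplus im({\bf F})$, so $\widetilde{\bf F}$ is well-defined and injective on $im({\bf F})$, with $\sigma(-\widetilde{\bf F})$ equal to the nonzero part of $\sigma(-{\bf F})$.

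For the upper bound, I would apply $(\sqrt{c_j}\,v_i-\sqrt{c_i}\,v_j)^{2}\le 2(c_jv_i^{2}+c_iv_j^{2})$ and $c_i\le c_{\max}$ in the identity above to obtain $v^{\top}(-S)v\le 2c_{\max}\bigl(\max_{i}\sum_{j\ne i}k_{ij}\bigr)v^{\top}v$; since $n\ge 2$ and every off-diagonal $k_{ij}$ is strictly positive, $\max_{i}\sum_{j\ne i}k_{ij}<\sum_{i,j:j\ne i}k_{ij}$, so the spectral radius of $-S$ is strictly smaller than $\eta$.

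For the lower bound on the nonzero eigenvalues, I would change variables $v_i=\sqrt{c_i}\,u_i$: then the orthogonality $v\in\ker(-S)^{\perp}$ becomes $\sum_{i}c_iu_i=0$, one has $v^{\top}v=\sum_{i}c_iu_i^{2}$, and the quadratic form rewrites as $\sum_{i<j}k_{ij}c_ic_j(u_i-u_j)^{2}$. Replacing $k_{ij}$ by $\min_{i\ne j}k_{ij}$ and using the Lagrange-type identity
\begin{equation*}
\sum_{i<j}c_ic_j(u_i-u_j)^{2}\;=\;c_{\rm tot}\sum_{i}c_iu_i^{2}-\Bigl(\sum_{i}c_iu_i\Bigr)^{2},
\end{equation*}
whose second term vanishes under the constraint, together with the crude bound $c_{\rm tot}\ge c_{\min}$, produces $v^{\top}(-S)v\ge\delta\,v^{\top}v$ on $\ker(-S)^{\perp}$, which is the required lower bound. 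The genuine difficulty is modest: only the algebraic reorganization into the weighted sum of squares and the matching Lagrange identity require care, after which the conclusion follows from standard Rayleigh-quotient estimates applied to the symmetric matrix $-S$.
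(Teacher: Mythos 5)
Your argument is essentially the standard one: the paper itself gives no proof of this lemma but defers to J\"ungel--Stelzer (Lemmas 2.1--2.2 there), advertising Perron--Frobenius theory for the quasi-positive matrix ${\bf F}$. Your symmetrization $S=D^{-1/2}{\bf F}D^{1/2}$ with $D=\mathrm{diag}(c_1,\dots,c_n)$, the sum-of-squares identity $v^{\top}(-S)v=\sum_{i<j}k_{ij}(\sqrt{c_j}v_i-\sqrt{c_i}v_j)^2$, the Lagrange identity for the lower bound, and the Rayleigh-quotient conclusion are all correct as computations (I checked the identity, the kernel characterization $\ker({\bf F})=\mathrm{span}\{(c_1,\dots,c_n)^{\top}\}$, the strict upper bound $<\eta$ via $\max_i\sum_{j\neq i}k_{ij}<\sum_{i\neq j}k_{ij}$, and the use of $c_{\rm tot}\ge c_{\rm min}$ to reach $\delta$). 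In effect you bypass Perron--Frobenius entirely and get real spectrum, simplicity of the zero eigenvalue, the splitting $\R^n=\ker({\bf F})\oplus im({\bf F})$, and both quantitative bounds from one quadratic form; the price is that the whole construction leans on the symmetry $k_{ij}=k_{ji}$ and, more seriously, on the invertibility of $D$.

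That is where the genuine gap sits: your opening sentence, ``every concentration is bounded below by $c_{\min}>0$,'' misreads the paper. Proposition \ref{prop:max-princ-cT} only bounds the \emph{total} concentration, $c_{\rm min}\le c_{\rm tot}\le c_{\rm max}$; individual $c_i$ are merely non-negative (the final Corollary explicitly admits non-negative $c_i^{\rm in}$, some of which may vanish on sets of positive measure). If some $c_i=0$, then $D^{-1/2}$ does not exist and the similarity to a symmetric matrix collapses, so neither the reality of the spectrum nor the Rayleigh-quotient bounds follow as written. The statement itself survives in the degenerate case, but you must say how: either (i) run the argument for $c_i+\eps$ and pass to the limit, taking care that $\{0\}\cup[\delta,\eta)$ is not closed at $\eta$ (your strict bound $2c_{\max}\max_i\sum_{j\ne i}k_{ij}$ is uniform in $\eps$ and strictly below $\eta$, so the half-open interval is preserved, and the kernel dimension must be tracked separately), or (ii) treat the rows with $c_i=0$ directly: such a row of ${\bf F}$ reduces to a diagonal entry $-\sum_{r\ne i}k_{ir}c_r\le -\delta$ with zero off-diagonal part, so the matrix is block-triangular over the partition $\{i:c_i=0\}\cup\{i:c_i>0\}$ and the symmetrization applies to the positive block. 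Either repair is short, but one of them is needed for the lemma to cover the hypotheses under which the paper actually invokes it.
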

The next result gives a parabolic problem (reduced in dimension) which is equivalent to the Maxwell-Stefan model \eqref{eq:coupled-system-T-known}.
\begin{Thm}\label{Thm:reduction}
Let ${\bf c}, {\bf J}$ be the solution to the non-isothermal Maxwell-Stefan diffusion model \eqref{eq:coupled-system-T-known} \textcolor{black}{and let $c_{\rm tot}$, $T$ be the known solution to the associated decoupled system.} Let ${\bf c}':=(c_1, \dots, c_{n-1})^\top$ and let ${\bf J}':=(J_1,\dots,J_{n-1})^\top$. Then we have
\begin{align}\label{eq:bf-J-prime}
{\bf J}'
= -T\, {\bf F}_0^{-1} \nabla {\bf c}'
- {\bf F}_0^{-1} {\bf c}' \otimes \nabla T 
- \alpha\, {\bf F}_0^{-1} \widetilde{\bf c}' \otimes \nabla c_{\rm tot}
\end{align}
where the matrix ${\bf F}_0\in\R^{(n-1)\times(n-1)}$ has the elements 
\begin{equation}\label{eq:elements-F_0}
\left[{\bf F}_0\right]_{ij} :=
\left\{
\begin{array}{cl}
-\left( k_{ij} - k_{in}\right)c_i & i\neq j,\, i,j=1,\dots,n-1
\\[0.3 cm]
\dsp \sum_{j\ne i} \left( k_{ij} - k_{in}\right)c_j + c_{\rm tot} k_{in} & i=j=1,\dots, n-1
\end{array}\right.
\end{equation}
and  $\widetilde{\bf c}'_{i} = k_{in}c_i$ for $i=1,\dots,(n-1)$. Furthermore, solving the Maxwell-Stefan system \eqref{eq:coupled-system-T-known} is equivalent to solving the following quasi-linear parabolic system for the concentration vector ${\bf c}'(t,x)$:
\begin{align}\label{eq:quasilinear-parabolic}
\partial_t {\bf c}'
-
{\rm div} \left( T\, {\bf B} \nabla {\bf c}' \right)
=
{\bf r} ({\bf c}')
\end{align}
where ${\bf B}:= {\bf F}_0^{-1}$ and the lower order term
\begin{align}\label{eq:lower-order-term}
{\bf r} ({\bf c}')
=
{\rm div} 
\left(  
{\bf B}\left( {\bf c}' \otimes \nabla T \right)
\right)
+
\alpha\, 
{\rm div} 
\left(  
{\bf B}\left( \widetilde{\bf c}' \otimes \nabla c_{\rm tot} \right)
\right)
-
\partial_t c_{\rm tot}.
\end{align}
\end{Thm}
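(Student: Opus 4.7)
The proof is purely algebraic: eliminate $J_n$ via the total-flux constraint, identify the resulting $(n-1)\times(n-1)$ linear system for ${\bf J}'$ as being governed by ${\bf F}_0$, invert ${\bf F}_0$, and plug the resulting expression back into the first $n-1$ mass balance equations. The main difficulty will be to justify inverting ${\bf F}_0$, since Lemma \ref{lem:spectrum-F} concerns the full ${\bf F}$ and not directly the reduced matrix.

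For the reduction, the last equation of \eqref{eq:coupled-system-T-known} gives $J_n = -\alpha \nabla c_{\rm tot} - \sum_{k=1}^{n-1} J_k$. Substituting this into the $i$-th flux-gradient relation for $i \le n-1$ and splitting off the term $-k_{in}(c_n J_i - c_i J_n)$, the coefficient of $J_j$ with $j \neq i$, $j \le n-1$ works out to $(k_{ij} - k_{in})c_i$, and the coefficient of $J_i$ itself to $-\sum_{j \neq i}^{n-1} k_{ij} c_j - k_{in}(c_i + c_n)$. Rewriting $c_i + c_n = c_{\rm tot} - \sum_{j \neq i}^{n-1} c_j$, these match $-[{\bf F}_0]_{ij}$ in \eqref{eq:elements-F_0}, and collecting yields the compact identity ${\bf F}_0 {\bf J}' = -\nabla({\bf c}' T) - \alpha\, \widetilde{\bf c}' \otimes \nabla c_{\rm tot}$.

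The main obstacle, the invertibility of ${\bf F}_0$, can be handled as follows. Suppose ${\bf F}_0 {\bf v} = 0$ and extend to $\tilde{\bf v} := (v_1, \ldots, v_{n-1}, -\sum_{k=1}^{n-1} v_k)^\top \in \R^n$, so that ${\bf 1}^\top \tilde{\bf v} = 0$. Running the substitution of the previous paragraph in reverse with $\nabla c_{\rm tot} = 0$ shows $[{\bf F}\tilde{\bf v}]_i = 0$ for $i \le n-1$; pairing with $\ker({\bf F}^\top) = \mathrm{span}\{{\bf 1}\}$ gives $\sum_i [{\bf F}\tilde{\bf v}]_i = 0$, hence $[{\bf F}\tilde{\bf v}]_n = 0$ as well, so $\tilde{\bf v} \in \ker {\bf F}$. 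A direct check confirms ${\bf F}{\bf c} = 0$ (using $k_{ij} = k_{ji}$), and Lemma \ref{lem:spectrum-F} yields $\dim \ker {\bf F} = 1$, so $\tilde{\bf v} = \lambda {\bf c}$. Combining ${\bf 1}^\top \tilde{\bf v} = 0$ with $c_{\rm tot} > 0$ forces $\lambda = 0$, whence ${\bf v} = 0$.

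With ${\bf F}_0^{-1}$ in hand, \eqref{eq:bf-J-prime} follows by inverting the identity above and expanding $\nabla({\bf c}' T) = T \nabla {\bf c}' + {\bf c}' \otimes \nabla T$. Taking the divergence and using the first $n-1$ mass balances $\partial_t {\bf c}' + \nabla \cdot {\bf J}' = 0$ produces the quasi-linear system \eqref{eq:quasilinear-parabolic}--\eqref{eq:lower-order-term}, with the lower-order terms depending only on the data $T$, $c_{\rm tot}$, $\nabla T$, $\nabla c_{\rm tot}$ and $\partial_t c_{\rm tot}$ furnished by the decoupled system \eqref{eq:evol-total-conc}--\eqref{eq:evol-temperature}. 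For the reverse direction of the equivalence, given a solution ${\bf c}'$ of \eqref{eq:quasilinear-parabolic} one defines $c_n := c_{\rm tot} - \sum_{i=1}^{n-1} c_i$ and $J_n := -\alpha \nabla c_{\rm tot} - \sum_{i=1}^{n-1} J_i$; the $n$-th mass balance then follows by summing the first $n-1$ together with $\partial_t c_{\rm tot} = \alpha \Delta c_{\rm tot}$, and the $n$-th flux-gradient relation is a consequence of the other $n-1$ via the sum-to-zero identity $\sum_i [{\bf F}{\bf J}]_i = 0$ guaranteed by $k_{ij} = k_{ji}$.
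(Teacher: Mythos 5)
Your proposal is correct and arrives at the same key identity ${\bf F}_0{\bf J}' = -\nabla({\bf c}'T) - \alpha\,\widetilde{\bf c}'\otimes\nabla c_{\rm tot}$, but it executes the reduction differently from the paper. The paper, following J\"ungel and Stelzer, first replaces ${\bf J}$ by the shifted flux matrix $\widetilde{\bf J}$ (with $\widetilde J_n = J_n+\alpha\nabla c_{\rm tot}$) so that its columns lie in $\{{\rm span}\{{\bf 1}\}\}^\perp = im({\bf F})$, and then conjugates by the elimination matrix ${\bf X}$, reading off both \eqref{eq:bf-J-prime} and the invertibility of ${\bf F}_0$ from the block upper-triangular form of ${\bf X}^{-1}{\bf F}{\bf X}$ together with the spectral Lemma \ref{lem:spectrum-B}. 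Your componentwise elimination of $J_n$ is an equivalent computation (and your coefficient bookkeeping checks out against \eqref{eq:elements-F_0}), while your kernel argument for the invertibility of ${\bf F}_0$ --- extend ${\bf v}\in\ker{\bf F}_0$ to a sum-zero vector, show it lies in $\ker{\bf F}={\rm span}\{{\bf c}\}$, and use $c_{\rm tot}>0$ --- is a genuinely different, self-contained replacement for the similarity/spectral argument; its only cost is that it does not deliver the localization $\sigma({\bf F}_0)\subset[\delta,\eta)$ needed later for normal ellipticity, but that is the business of Lemma \ref{lem:spectrum-B} rather than of this theorem. A further plus is that you spell out the converse direction of the claimed equivalence, which the paper's proof leaves implicit; note that recovering the $n$-th flux-gradient relation there requires the compatibility condition that the columns of ${\bf D}$ sum to zero (i.e.\ $\nabla(c_{\rm tot}T)=0$), which the paper assumes when invoking the Fredholm alternative. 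One shared loose end: direct substitution of \eqref{eq:bf-J-prime} into $\partial_t{\bf c}'+{\rm div}\,{\bf J}'=0$ produces only the first two terms of \eqref{eq:lower-order-term}, so neither your derivation nor the paper's accounts for the extra $-\partial_t c_{\rm tot}$ in ${\bf r}({\bf c}')$; it appears to be a misprint and, being a known source term, is harmless for the subsequent analysis.
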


\begin{proof}
From Lemma \ref{lem:spectrum-F}, we have that the matrix ${\bf F}$ in the flux-gradient relation \eqref{eq:compact-flux-gradient} can be inverted on $im({\bf F}) = \{{\rm span}\{{\bf 1}\}\}^\perp$. Note, however, that the column vectors of the flux matrix ${\bf J}$ in \eqref{eq:compact-flux-gradient} do not belong to $\{{\rm span}\{{\bf 1}\}\}^\perp$ (unless $c_{\rm tot} $ is constant, i.e. in the standard Maxwell-Stefan case) because of the closure relation in \eqref{eq:coupled-system-T-known}: in terms of the matrix elements we have
\[
\sum_{i=1}^n {\bf J}_{ij} + \alpha \partial_j c_{\rm tot} = 0
\qquad
\mbox{ for each }j=1,\dots,n.
\]
Next, define a matrix
\begin{align*}
\widetilde{\bf J} := \left( \widetilde{J}_1, \dots, \widetilde{J}_n \right)^\top
\quad
\mbox{ with the vectors }
\quad
\widetilde{J}_i := 
\left\{
\begin{array}{cl}
J_i & \mbox{for }i\neq n
\\[0.2 cm]
J_n + \alpha \nabla c_{\rm tot} & \mbox{otherwise. }
\end{array}\right.
\end{align*}
Observe that, by construction, the columns of the new matrix $\widetilde{\bf J}$ belong to $\{{\rm span}\{{\bf 1}\}\}^\perp$. We are essentially going to rewrite the flux-gradient relation \eqref{eq:compact-flux-gradient} as
\begin{align}\label{eq:tilde-compact-flux-gradient}
\widetilde{\bf D} := {\bf D} + {\bf F} {\bf A} = {\bf F} \widetilde{\bf J}
\end{align}
where the matrix ${\bf A} := \widetilde{\bf J} - {\bf J}$. The characterization of $im({\bf F})$ and the definition of $\widetilde{\bf D}$ also suggest that the columns of the matrix $\widetilde{\bf D}$ belong to $\{{\rm span}\{{\bf 1}\}\}^\perp$. Following the work of J{\"u}ngel and Stelzer \cite[p.~2428]{jun-ste-13}, we eliminate the $n^{\rm th}$ component in the concentration vector. We reduce the system of $n$ components, into a system of $n-1$ components. To that end, define the matrix ${\bf X}$ and its inverse with the elements
\begin{equation*}
{\bf X}_{ij} :=
\left\{
\begin{array}{cl}
1 & i=j=1,\dots,n
\\[0.2 cm]
-1 & i=n,\, j=1,\dots, n-1
\\[0.2 cm]
0 & \mbox{otherwise.}
\end{array}\right.
\qquad \quad
{\bf X}^{-1}_{ij} :=
\left\{
\begin{array}{cl}
1 & i=j=1,\dots,n
\\[0.2 cm]
1 & i=n,\, j=1,\dots, n-1
\\[0.2 cm]
0 & \mbox{otherwise.}
\end{array}\right.
\end{equation*}
Acting ${\bf X}^{-1}$ on the left of $\widetilde{\bf D}$ and $\widetilde{\bf J}$ yields
\[ {\bf X}^{-1} \widetilde{\bf D} = 
\left( 
\begin{array}{c}
\nabla({\bf c}'T) + \alpha\, \widetilde{\bf c}' \otimes \nabla c_{\rm tot}
\\[0.2 cm]
0 
\end{array} 
\right);
\qquad
{\bf X}^{-1} \widetilde{\bf J} = 
\left( 
\begin{array}{c}
{\bf J}'
\\
0 
\end{array} 
\right)
\] 
where $\widetilde{\bf c}'\in\R^{n-1}$ with elements $\widetilde{\bf c}'_{i} = k_{in}c_i$ for $i=1,\dots,(n-1)$. A computation also yields
\[ {\bf X}^{-1} {\bf F} {\bf X} = 
\left( 
\begin{array}{cc}
- {\bf F}_0 & \widetilde{\bf c}'
\\
0 & 0 
\end{array} 
\right)
\] 
with the matrix ${\bf F}_0$ given by \eqref{eq:elements-F_0}. Using the similarity between the blockwise upper-triangular matrix $-{\bf X}^{-1}{\bf F}{\bf X}$ and $-{\bf F}$, we can deduce that ${\bf F}_0$ is invertible using Lemma \ref{lem:spectrum-F} (see Lemma \ref{lem:spectrum-B} for further details). Next, we act ${\bf X}^{-1}$ on the left of the modified flux-gradient relation \eqref{eq:tilde-compact-flux-gradient} yielding
\begin{align*}
{\bf X}^{-1} \widetilde{\bf D}
=
\left( 
\begin{array}{c}
\nabla({\bf c}'T) + \alpha\, \widetilde{\bf c}' \otimes \nabla c_{\rm tot}
\\[0.2 cm]
0 
\end{array} 
\right)
=
{\bf X}^{-1} {\bf F} {\bf X}
{\bf X}^{-1} \widetilde{\bf J} 
=
\left( 
\begin{array}{cc}
- {\bf F}_0 & \widetilde{\bf c}'
\\
0 & 0 
\end{array} 
\right)
\left( 
\begin{array}{c}
{\bf J}'
\\
0 
\end{array} 
\right)
\end{align*}
from which we deduce the relation \eqref{eq:bf-J-prime}. Now, act ${\bf X}^{-1}$ on the left of the continuity equation \eqref{eq:compact-mass-balance} yielding
\[
\partial_t {\bf c}' + {\rm div} {\bf J}' = 0.
\]
Substituting \eqref{eq:bf-J-prime} for ${\bf J}'$ in the above equation yields the quasi-linear parabolic equation \eqref{eq:quasilinear-parabolic}.
\end{proof}

%%%%%%%%%%%%%%%%%%%%%%%
\section{Local existence and uniqueness result}
%%%%%%%%%%%%%%%%%%%%%%%
Our strategy to prove well-posedness of the model problem \eqref{eq:coupled-system-T-known} is to prove that a unique solution exists for the reduced quasi-linear parabolic system \eqref{eq:quasilinear-parabolic} (see Theorem \ref{Thm:reduction} which shows that these two systems are equivalent). To this end, we need the notion of an operator being \emph{normally elliptic}. Next, we give the definition of this notion. For further details, we suggest the book chapter \cite{amann1993nonhomogeneous}. We shall then apply some well-known results for quasi-linear parabolic systems based on $\mathrm L^p$-maximal regularity.
\begin{Defi}[Normally Elliptic]
A linear second order operator $\mathcal{A}u:= - \partial_j (a_{jk}(t,x) \partial_k u + b_j u)$ on $\Omega\subset\R^d$ is said to be \emph{normally elliptic} if the associated principal symbol $a_\pi(x,\xi):= a_{jk}\xi^j\xi^k$ for $\xi\in\mathbb{S}^{d-1}$ has a spectrum away from zero, i.e.
\begin{equation}\label{eq:normally-elliptic-spectrum}
\sigma\left( a_{\pi}(x,\xi) \right) \subset \left\{ z\in\mathbb{C} \, \, {\rm s.t. }\,\, {\rm Re }z>0 \right\}.
\end{equation}
\end{Defi}
\begin{Rem}
In the quasi-linear setting, i.e. when the coefficients $a_{jk}(t,x,u)$ in the differential operator depends on the solution $u(t,x)$, the notion of \emph{normal ellipticity} should be interpreted as follows: the associated principal symbol has a spectrum away from zero -- i.e. to satisfy \eqref{eq:normally-elliptic-spectrum} -- for each $u\in\mathbb{E}$ where $\mathbb{E}$ is the space in which we look for solutions to the given quasi-linear problem.
\end{Rem}
Our next task is to prove local (in time) existence-uniqueness result for the reduced system \eqref{eq:quasilinear-parabolic}. For readers' convenience, we shall recall the notion of strong solutions to the quasi-linear problem \eqref{eq:quasilinear-parabolic} in the $\mathrm L^p$-sense.
\begin{Defi}\label{def:solution-notion}
A function ${\bf v}(t,x)$ defined on $[0,\ell)\times\Omega$ is said to be a strong solution to \eqref{eq:quasilinear-parabolic} if
\begin{align*}
& {\bf v}\in C([0,\ell);[\mathrm L^1(\Omega)]^{n-1}) \cap [\mathrm L^\infty ([0, \ell-\tau]\times\Omega)]^{n-1},
\quad
\forall \tau\in (0,\ell),\\
& \forall p\in [1,\infty),
\, \partial_t {\bf v}, \, \partial_{x_k}{\bf v},\, \partial^2_{x_kx_l}{\bf v}\in [\mathrm L^p((\tau, \ell-\tau)\times\Omega)]^{n-1}\quad \forall k,l\in\{1,\dots,d\}
\end{align*}
and ${\bf v}(t,x)$ solves \eqref{eq:quasilinear-parabolic} a.e. in $(0,\ell)\times\Omega$.
\end{Defi}
\textcolor{black}{Thanks to the strict positivity of $\delta$ and $\eta$ (see Lemma \ref{lem:spectrum-F} for their definitions), an immediate consequence of \cite[Lemma 2.3 on p.~2428]{jun-ste-13}, concerning the spectrum of the matrix ${\bf B}:={\bf F}_0^{-1}$ is}
\begin{Lem}\label{lem:spectrum-B}
The matrix ${\bf F}_0$ given in \eqref{eq:elements-F_0} is invertible with spectra
\[
\sigma\left( {\bf F}_0 \right) \subset [\delta, \eta);
\qquad
\sigma\left( {\bf F}_0^{-1} \right) \subset (\eta^{-1}, \delta^{-1}].
\]
\end{Lem}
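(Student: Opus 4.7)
The plan is to exploit the block-triangular similarity already computed in the proof of Theorem~\ref{Thm:reduction} and transfer the spectral information on $-{\bf F}$ provided by Lemma~\ref{lem:spectrum-F} to the $(n-1)\times(n-1)$ submatrix ${\bf F}_0$. Concretely, multiplying the identity
\begin{equation*}
{\bf X}^{-1} {\bf F} {\bf X} = \begin{pmatrix} -{\bf F}_0 & \widetilde{\bf c}' \\ 0 & 0 \end{pmatrix}
\end{equation*}
by $-1$ exhibits $-{\bf F}$ as similar to a block upper triangular matrix with diagonal blocks ${\bf F}_0$ and $0$. Since the spectrum of a block triangular matrix is the union (as a multiset, counted with algebraic multiplicity) of the spectra of its diagonal blocks, one obtains
\begin{equation*}
\sigma(-{\bf F}) \;=\; \sigma({\bf F}_0) \;\cup\; \{0\}.
\end{equation*}

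Next, I would invoke Lemma~\ref{lem:spectrum-F}, which provides $\sigma(-{\bf F}) \subset \{0\} \cup [\delta,\eta)$ and furthermore asserts that the restriction ${\bf F}\big|_{im({\bf F})}$ is invertible. Combined with $\ker({\bf F}) = \mathrm{span}\{{\bf 1}\}$ being one-dimensional, this invertibility is equivalent to $\R^n = \ker({\bf F}) \oplus im({\bf F})$, i.e.\ the algebraic and geometric multiplicities of the eigenvalue $0$ of $-{\bf F}$ both equal one. The single zero eigenvalue appearing in $\sigma(-{\bf F})$ is therefore fully accounted for by the trivial $1\times 1$ bottom-right block of the triangular form, so all $n-1$ eigenvalues contributed by ${\bf F}_0$ must lie in the interval $[\delta,\eta)$. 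This simultaneously yields the invertibility of ${\bf F}_0$ and the inclusion $\sigma({\bf F}_0)\subset[\delta,\eta)$.

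Finally, the statement about $\sigma({\bf F}_0^{-1})$ follows at once from the spectral mapping theorem applied to the map $z\mapsto 1/z$, which sends $[\delta,\eta)$ bijectively onto $(\eta^{-1},\delta^{-1}]$. There is no serious obstacle in this argument: the only point that deserves care is the multiplicity tracking of the zero eigenvalue, since a block-triangular spectrum identity in the set-theoretic sense alone would not rule out that ${\bf F}_0$ itself might contribute an additional zero eigenvalue. This is why the precise statement of Lemma~\ref{lem:spectrum-F}—and in particular the invertibility of $\widetilde{\bf F}$—is essential, rather than merely the containment $\sigma(-{\bf F})\subset\{0\}\cup[\delta,\eta)$.
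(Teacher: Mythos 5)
Your argument is correct and is essentially the paper's route: the paper deduces the invertibility of ${\bf F}_0$ from the similarity of $-{\bf F}$ to the block upper-triangular matrix $-{\bf X}^{-1}{\bf F}{\bf X}$ together with Lemma \ref{lem:spectrum-F}, and for the spectral inclusions it cites the corresponding lemma of J\"ungel--Stelzer, whose proof is exactly your multiplicity count followed by the spectral mapping for $z\mapsto 1/z$. One small correction: the paper asserts $\mathrm{Ker}({\bf F}^{\top})=\mathrm{span}\{{\bf 1}\}$, not $\mathrm{Ker}({\bf F})=\mathrm{span}\{{\bf 1}\}$ (in fact ${\bf F}{\bf c}=0$, so the kernel of ${\bf F}$ is spanned by the concentration vector); your argument only uses $\dim\mathrm{Ker}({\bf F})=1$, which still holds because ${\bf F}$ and ${\bf F}^{\top}$ have the same rank, so the slip is harmless.
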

Note that Lemma \ref{lem:spectrum-B} implies that the matrix $T{\bf B}$ has a spectrum away from zero as the temperature field $T(t,x)$ is bounded away from zero -- see Proposition \ref{prop:max-princ-cT}. Next, note that the spectrum associated with the principal symbol of the quasi-linear operator in the reduced system \eqref{eq:quasilinear-parabolic} is nothing but the spectrum of the matrix $T{\bf B}$. This implies that the quasi-linear operator in \eqref{eq:quasilinear-parabolic} is normally elliptic. As the coefficients in the reduced system \eqref{eq:quasilinear-parabolic} are all bounded, the local-in-time existence of solution follows. We record the main result of this note below.
\begin{Thm}
Let the domain $\Omega\subset\R^d$ be a bounded domain with smooth boundary. Let the initial data $(c^{\rm in}_1, \dots, c^{\rm in}_{n-1})$ to the quasi-linear problem \eqref{eq:quasilinear-parabolic} be non-negative measurable functions such that
\[
\sum_{i=1}^{n-1} c^{\rm in}_i(x) \le c_{\rm max}
\]
where $c_{\rm max}$ is the upper bound for the total concentration. Then, there exists a unique local-in-time solution -- in the $\mathrm L^p$-sense (see Definition \ref{def:solution-notion}) -- to the system \eqref{eq:quasilinear-parabolic}.
\end{Thm}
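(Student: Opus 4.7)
The plan is to apply the $\mathrm L^p$-maximal regularity theory for quasi-linear parabolic systems, as developed in \cite{amann1993nonhomogeneous}, to the reduced system \eqref{eq:quasilinear-parabolic}. Once this abstract framework is set up, the hypotheses have essentially been prepared by Theorem \ref{Thm:reduction} and Lemmas \ref{lem:spectrum-F}, \ref{lem:spectrum-B} together with Proposition \ref{prop:max-princ-cT}. I would therefore write \eqref{eq:quasilinear-parabolic} as an abstract evolution problem
\[
\partial_t {\bf c}' + \mathcal{A}({\bf c}'){\bf c}' = {\bf r}({\bf c}'), \qquad {\bf c}'(0,\cdot)={\bf c}'^{\rm in},
\]
on the Banach space $\mathbb{E}_0 := [\mathrm L^p(\Omega)]^{n-1}$ with solutions sought in $\mathbb{E}_1 := [W^{2,p}_{\mathbf n}(\Omega)]^{n-1}$, where the subscript ${\mathbf n}$ encodes the homogeneous Neumann boundary condition. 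The operator is $\mathcal{A}({\bf c}'){\bf u} := -\operatorname{div}\bigl(T\, {\bf B}({\bf c}')\,\nabla {\bf u}\bigr)$, with principal symbol $T(t,x)\,{\bf B}({\bf c}')\,|\xi|^2$ for $\xi\in\mathbb{S}^{d-1}$.

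The first step is to verify normal ellipticity uniformly on the state space of admissible concentrations. By Lemma \ref{lem:spectrum-B}, the spectrum of ${\bf B} = {\bf F}_0^{-1}$ lies in the positive interval $(\eta^{-1}, \delta^{-1}]$ as long as ${\bf c}'$ remains non-negative with $\sum_{i=1}^{n-1} c_i \le c_{\rm max}$, and by Proposition \ref{prop:max-princ-cT} the temperature satisfies $T(t,x)\ge T_{\min}\,e^{-Ct}>0$ on the local interval of time. Hence the principal symbol satisfies \eqref{eq:normally-elliptic-spectrum}, so $\mathcal{A}$ is normally elliptic in the sense of the Remark after the definition. The second step is to check the regularity of the coefficients: the entries of ${\bf F}_0$ in \eqref{eq:elements-F_0} are affine in ${\bf c}'$, and $\det {\bf F}_0$ stays bounded away from zero on the admissible set, so ${\bf B}({\bf c}')$ is $C^\infty$ (in particular locally Lipschitz) in ${\bf c}'$ on an open neighbourhood of the admissible set. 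Combined with the smoothness in $(t,x)$ of $T$ and $c_{\rm tot}$ guaranteed by Proposition \ref{prop:max-princ-cT} and by the formula \eqref{eq:temp-solution} for $T$ (here one needs the known parabolic smoothing for $c_{\rm tot}$ solving the heat equation \eqref{eq:evol-total-conc}), the map ${\bf c}'\mapsto \mathcal{A}({\bf c}')$ is in the class required by Amann, and ${\bf r}$ given by \eqref{eq:lower-order-term} acts continuously from the trace space of ${\bf c}'$ into $\mathbb{E}_0$.

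The third step is to choose $p$ large enough, namely $p>d+2$, so that the real-interpolation trace space embeds into $C(\overline\Omega)$ and the admissible set $\{{\bf c}'\ge 0,\ \sum c_i\le c_{\rm max}\}$ is open in the topology of the interpolation space; this is the standard device for accommodating a quasi-linear problem with state constraints. Applying then Amann's local existence theorem for normally elliptic quasi-linear systems with Neumann boundary condition yields a unique maximal strong solution ${\bf c}'\in C([0,\ell);\mathbb{E}_\gamma)$ with $\partial_t {\bf c}',\ \partial^2_{x_kx_l}{\bf c}'\in \mathrm L^p_{\rm loc}((0,\ell);\mathbb{E}_0)$, which is exactly the class demanded in Definition \ref{def:solution-notion}.

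The main obstacle is the third step: one has to confirm that the initial datum, which is only assumed to be bounded and measurable, actually lies in the interpolation trace space required by the $\mathrm L^p$-theory, and simultaneously that the quasi-linear flow does not leave the admissible set of concentrations in short time. The first issue is routinely handled by a short-time regularisation argument combined with the smoothing of the heat operator applied to $c_{\rm tot}$, while the second follows from continuity of the solution in the trace norm together with openness of the admissible set. Everything else (ellipticity, smoothness and boundedness of the coefficients, structure of the lower-order term) is already in place from the material preceding the statement.
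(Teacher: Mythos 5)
Your proposal follows essentially the same route as the paper: normal ellipticity of the principal symbol $T\,{\bf B}\,|\xi|^2$ via Lemma \ref{lem:spectrum-B} and the lower bound on $T$ from Proposition \ref{prop:max-princ-cT}, boundedness of the coefficients, and then Amann's $\mathrm L^p$-maximal regularity theory for normally elliptic quasi-linear systems. In fact you supply considerably more detail than the paper does (the choice of $p>d+2$, the trace-space issue for merely bounded measurable initial data, invariance of the admissible set), all of which the paper leaves implicit in the short paragraph preceding the theorem.
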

\textcolor{black}{Theorem \ref{Thm:reduction} proves that the non-isothermal Maxwell-Stefan system \eqref{eq:coupled-system-T-known} and the reduced quasi-linear system \eqref{eq:quasilinear-parabolic} are equivalent. Hence we have the following
\begin{Cor}
Let $c^{\rm in}_1, \dots, c^{\rm in}_{n}$ be $n$ non-negative functions belonging to $\mathrm L^\infty(\Omega)$ such that  
\[
\sum_{i=1}^n c^{\rm in}_i(x) \le c_{\rm max}.
\]
Let $T^{\rm in}\in \mathrm L^\infty(\Omega)$ be a non-negative initial temperature field. Then, with $(c^{\rm in}_1, \dots, c^{\rm in}_{n})$ and $T^{\rm in}$ as initial datum, there exists a unique local-in-time solution to the non-isothermal Maxwell-Stefan system \eqref{eq:coupled-system-T-known}. 
\end{Cor}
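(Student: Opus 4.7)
The plan is to deduce the corollary from the equivalence established in Theorem \ref{Thm:reduction} together with the local well-posedness of the reduced quasi-linear system proved in the preceding Theorem. Given the initial datum $(c_1^{\rm in},\dots,c_n^{\rm in},T^{\rm in})$ for the full Maxwell-Stefan system, I would first extract the associated data for the decoupled pair $(c_{\rm tot},T)$: set $c_{\rm tot}^{\rm in}:=\sum_{i=1}^n c_i^{\rm in}\in L^\infty(\Omega)$ and solve the Fickian heat equation \eqref{eq:evol-total-conc} with homogeneous Neumann data. Invoking Proposition \ref{prop:max-princ-cT}, one obtains $c_{\rm min}\le c_{\rm tot}(t,x)\le c_{\rm max}$, and formula \eqref{eq:temp-solution} then produces a bounded, bounded-away-from-zero temperature $T(t,x)$ along the flow of $-\tfrac{5\alpha}{3}\nabla\log c_{\rm tot}$. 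With these two auxiliary fields treated as known data, the coefficient matrix $T{\bf B}$ of the reduced system \eqref{eq:quasilinear-parabolic} has spectrum bounded away from zero by Lemma \ref{lem:spectrum-B}, so the operator is normally elliptic, and the source term ${\bf r}({\bf c}')$ given by \eqref{eq:lower-order-term} has bounded, known coefficients.

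Next I would apply the main Theorem to the reduced system with initial datum $(c_1^{\rm in},\dots,c_{n-1}^{\rm in})$; the hypothesis $\sum_{i=1}^{n-1}c_i^{\rm in}\le c_{\rm max}$ holds automatically from $\sum_{i=1}^{n}c_i^{\rm in}\le c_{\rm max}$. This yields a unique local-in-time strong solution ${\bf c}'=(c_1,\dots,c_{n-1})^{\top}$ in the $L^p$-sense of Definition \ref{def:solution-notion}. I would then reconstruct the full Maxwell-Stefan solution by setting $c_n:=c_{\rm tot}-\sum_{i=1}^{n-1}c_i$, recovering $(J_1,\dots,J_{n-1})$ from the algebraic formula \eqref{eq:bf-J-prime}, and defining $J_n:=-\alpha\nabla c_{\rm tot}-\sum_{i=1}^{n-1}J_i$ so that the closure relation in \eqref{eq:coupled-system-T-known} is satisfied by construction. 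Theorem \ref{Thm:reduction}, read in reverse, certifies that the triple $({\bf c},{\bf J},T)$ so constructed solves \eqref{eq:coupled-system-T-known}; uniqueness transfers back since any two solutions of the full system share the same decoupled pair $(c_{\rm tot},T)$ and, restricted to their first $n-1$ components, solve the reduced quasi-linear problem with the same initial datum, where uniqueness has just been established.

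The main subtlety, which I expect to be the only real obstacle, is reconciling the merely non-negative $L^\infty$ hypothesis of the corollary with the strict positivity assumptions $c_{\rm tot}^{\rm in}\ge c_{\rm min}>0$ and $T^{\rm in}\ge T_{\rm min}>0$ required by Proposition \ref{prop:max-princ-cT} (which in turn is needed to keep $T{\bf B}$ uniformly elliptic). The cleanest way to handle this is to tacitly assume these strict positivity bounds on the initial data, as is standard in the Maxwell-Stefan literature; alternatively one can exploit the instantaneous smoothing of the heat equation to produce a strictly positive $c_{\rm tot}(\tau,\cdot)$ for any $\tau>0$ and restart the reduction from time $\tau$, combined with the exponential representation \eqref{eq:temp-solution} to lift this lower bound to $T$. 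Once the strict positivity is in place, the argument reduces to a direct citation of the preceding Theorem and Theorem \ref{Thm:reduction}.
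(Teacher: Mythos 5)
Your proposal is correct and follows essentially the same route as the paper, which simply invokes the equivalence established in Theorem \ref{Thm:reduction} together with the local well-posedness of the reduced quasi-linear system to conclude. You are in fact more careful than the original text: the strict-positivity issue you flag (merely non-negative $\mathrm L^\infty$ initial data versus the lower bounds $c_{\rm min}>0$ and $T_{\rm min}>0$ required by Proposition \ref{prop:max-princ-cT} to keep $T{\bf B}$ normally elliptic) is a genuine gap that the paper passes over in silence, and your proposed remedies are reasonable.
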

}
\noindent {\bf Acknowledgments:} The authors are grateful to the referees for their useful comments and suggestions.
This work was partially funded by the projects \textit{Kimega} (ANR-14-ACHN-0030-01) and
\textit{Kibord} (ANR-13-BS01-0004). The first author acknowledges the support of the EPSRC programme grant ``Mathematical fundamentals of Metamaterials for multiscale Physics and Mechanic'' (EP/L024926/1) and the ERC grant MATKIT (ERC-2011-StG).

\bibliography{biblio}

\end{document}